\numberwithin{equation}{section}
\newtheorem{thm}{Theorem}[section]
\newtheorem{lem}[thm]{Lemma}
\newtheorem{lemma}[thm]{Lemma}
\newtheorem{corollary}[thm]{Corollary}
\theoremstyle{definition}
\newtheorem*{remark}{Remark}
\newtheorem{example}[thm]{Example}
\newcommand{\C}{\mathbb{C}}
\newcommand{\Z}{\mathbb{Z}}
\newcommand{\R}{\mathbb{R}}
\begin{document}
\begin{frontmatter}

\title{On new proper Jordan schemes related to quaternion and octonion algebras}

\author[Hanaki]{Akihide Hanaki}
\address[Hanaki]{Faculty of Science, Shinshu University, 3-1-1 Asahi, Matsumoto, 390-8621, Japan}
\ead{hanaki@shinshu-u.ac.jp}

\author[yoshi]{Masayoshi Yoshikawa}
\address[yoshi]{Department of Mathematics, Hyogo University of Teacher Education, 942-1 Shimokume, Kato, Hyogo, 673-1494, Japan}
\ead{myoshi@hyogo-u.ac.jp}

\begin{abstract}
  We present a construction of a Jordan scheme from an elementary abelian $2$-group of rank $n$ and a $\{1,-1\}$-matrix of order $2^n$ that satisfies a specified condition.
  We then prove that the orders of matrices with the specified condition are limited to $2, 4$ or $8$.
  Using these matrices, we construct essentially two new proper Jordan schemes of orders $16$ and $32$.
  Finally, we analyze the structures of the real adjacency Jordan algebras of these Jordan schemes and 
  prove that they are the first known examples whose real adjacency Jordan algebras admit simple components of type $\R \oplus_f \R^n$, namely non-Hermitian type.
\end{abstract}

\begin{keyword}
adjacency Jordan algebras \sep Jordan schemes \sep octonion algebra \sep quaternion algebra

\MSC[2020] 05E30, 05E99, 17C20
\end{keyword}

\end{frontmatter}

%%%%%%%%%%%%%%%%%%%%%%%%%%%%%%%%%%%%%%%%%%%%%%%%%%%%%%%%%%%%%%%%%%%%
\section{Introduction}
Association schemes are main research objects in algebraic combinatorics, and has been investigated by many researchers.
Association schemes are defined as follows.

Let $X$ be a finite set, and let $M_X(\C)$ denote the set of complex matrices which rows and columns indexed by $X$.
Let $S = \{A_0 = I, A_1, \dots, A_d\} \subset M_X(\C)$ be a set of $\{0, 1\}$-matrices with the following conditions,
where $I$ denotes the identity matrix.
\begin{enumerate}
  \item[(1)] $\sum_{i=0}^{d} A_i = J$, where $J$ denotes the all-one matrix.
  \item[(2)] For any $i \in \{0, 1, \dots, d\}$, there exists $i^{*} \in \{0,1,\dots, d\}$ such that ${}^{t} A_{i} = A_{i^{*}}$, where ${}^t M$ denotes the transpose of the matrix $M$.
  \item[(3)] For $i, j, k \in \{0, 1, \dots, d\}$, there exists a non-negative integer $p_{ij}^{k}$ such that $A_i A_j = \sum_{k = 0}^{d} p_{ij}^{k} A_k$.
\end{enumerate}
Then the pair $(X, S)$ is called an \emph{association scheme}.
An association scheme is called \emph{symmetric} if all matrices of $S$ are symmetric, and \emph{commutative} if $A_i A_j = A_j A_i$ for any $i, j \in \{0,1,\dots, d\}$.

In \cite{cameron}, Cameron introduced the concept of {\it Jordan schemes}. He defined Jordan schemes by replacing the ordinary matrix product with the Jordan product in condition (3) of the definition of symmetric association schemes,
where the Jordan product $A * B$ of two matrices $A$ and $B$ of the same order is defined by
\[A * B := \dfrac{AB + BA}{2}.\]

Let $(X,S)$ be an association scheme. 
For any element $A_i \in S$, we define $\tilde{A_i}$ by 
\[\tilde{A_i} := \begin{cases}
  A_i & \text{ if $A_i = {}^t A_i$,} \\
  A_i + {}^t A_i & \text{ otherwise.}
\end{cases}\]
We set $\tilde{S} := \{ \tilde{A_i} : A_i \in S\}$.
Then the pair $(X, \tilde{S})$ is called the \emph{symmetrization} of the association scheme $(X,S)$.
It is well known that the symmetrization of any commutative association scheme is also an association scheme, whereas the symmetrization of a noncommutative association scheme is not necessarily an association scheme but is a Jordan scheme.
Thus a Jordan scheme is called \emph{improper} or \emph{nonproper} if it is the symmetrization of an association scheme, and \emph{proper} otherwise.
Cameron posed the question of whether proper Jordan schemes exist.
In \cite{kmr}, Klin, Muzychuk and Reichard gave an affirmative answer to this question.
In this paper, we introduce further examples of proper Jordan schemes and study the structures of their adjacency Jordan algebras over the field $\R$ of real numbers.
We show that these Jordan schemes furnish the first known examples whose real adjacency Jordan algebras admit simple components of type $\R \oplus_f \R^n$, namely non-Hermitian type.
Moreover, we prove that our construction yields exactly two proper Jordan schemes.

This paper is organized as follows.
In Section 2 we provide the definitions and the basic properties of Jordan schemes and Jordan algebras.
In Sections 3 and 4 we introduce a construction of a proper Jordan scheme from an elementary abelian $2$-group and a $\{1, -1\}$-matrix satisfying a specified condition.
We then obtain two new proper Jordan schemes.
In Section 5 we determine the structures of the adjacency Jordan algebras of two new proper Jordan schemes over the field $\R$ of real numbers.

%%%%%%%%%%%%%%%%%%%%%%%%%
\section{Jordan schemes and Jordan algebras}
For square matrices $A$ and $B$ of the same order, we define 
\[A * B := \dfrac{AB + BA}{2}\]
and call this the \emph{Jordan product}.
We remark that the Jordan product is not associative, in general.

It is easy to check the following lemma.
\begin{lemma} \label{AB=BA}
  \begin{enumerate}
    \item $A*B = B*A$.
    \item If $AB = BA$, then $A*B = AB$.
  \end{enumerate}  
\end{lemma}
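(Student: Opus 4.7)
The plan is to verify both parts directly from the defining formula $A*B = (AB+BA)/2$, relying only on the commutativity of matrix addition and, for the second part, the standing hypothesis $AB=BA$.

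For part (1), I would simply reorder the two summands in the numerator of the Jordan product: since addition of matrices is commutative, $AB+BA = BA+AB$, and dividing by $2$ yields $A*B = B*A$. For part (2), under the assumption $AB=BA$, the numerator collapses as $AB+BA = 2AB$, whence $A*B = AB$ after dividing by $2$. In both cases the argument is a one-line manipulation on the definition.

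The main obstacle: there is none. The lemma is a bare definitional check, which is precisely why the authors describe it as easy. The only point worth keeping in mind going forward is the conceptual distinction between the Jordan product $A*B$ and the ordinary matrix product $AB$: part (2) records that the two coincide exactly when $A$ and $B$ commute, whereas in general they differ and the Jordan product fails to be associative, a fact that will be essential throughout the remainder of the paper.
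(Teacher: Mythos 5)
Your proof is correct and is exactly the routine verification the paper has in mind (the authors state the lemma without proof, noting only that it is easy to check): part (1) from commutativity of matrix addition, part (2) from $AB+BA=2AB$ under the hypothesis. Nothing further is needed.
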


Let $X$ be a finite set and let $M_X(\C)$ denote the set of complex matrices which rows and columns indexed by $X$. 
Let $S = \{A_0 = I, A_1, \dots, A_d \} \subset M_{X}(\C)$ be a set of symmetric $\{0,1\}$-matrices with the following conditions, where $I$ denotes the identity matrix.
\begin{enumerate}
  \item $\sum_{i=0}^{d} A_i = J$, where $J$ denotes the all-one matrix.
  \item For $i, j, k \in \{0, 1, \dots, d\}$, there exists a rational number $p_{ij}^{k} \in \frac{1}{2}\Z$ such that 
  \[A_i * A_j = \sum_{k = 0}^{d} p_{ij}^{k} A_k.\]
\end{enumerate}
In this case, we call the pair $(X, S)$ a \emph{Jordan scheme}; for simplicity, we may also refer to $S$ itself as a Jordan scheme.
From Lemma \ref{AB=BA}(2), all symmetric association schemes are Jordan schemes.

We refer to \cite{jacobson} and \cite{minnesota} on Jordan algebras.
In this paper, we consider only Jordan algebras over the field $\R$ of real numbers.

Let $A$ be an $\R$-vector space together with a bilinear multiplication $*$ which satisfies the following: For any $a, b \in A$,
\begin{enumerate}
 \item $a*b = b*a$,
 \item $a^{*2}*(b*a) = (a^{*2}*b)*a$, where $a^{*2} = a*a$.
\end{enumerate}
Then $A$ is called a {\it Jordan algebra}.

Let $B$ be an associative algebra over $\R$.
Then the vector space $B$, equipped with the Jordan product 
\[a * b = \frac{1}{2} (ab+ba), \] 
is a Jordan algebra, where $ab$ denotes the product in the associative algebra.
This Jordan algebra $(B, *)$ is denoted by $B^+$.
A Jordan algebra $A$ is called \emph{special} if $A$ is isomorphic to a subalgebra of $B^+$ for some associative algebra $B$.

In the following, let $A$ be a Jordan algebra.

A subset $I$ of $A$ is called an \emph{ideal} of $A$ if
\begin{enumerate}
	\item $I$ is a subspace of the vector space $A$,
	\item $a * x \in I$ for all $a \in A, x \in I$.
\end{enumerate}

Two elements $a, b \in A$ are said to \emph{commute} if for every $x \in A$,
\[a * (b * x) = b * (a * x). \]
We define the \emph{center} $Z(A)$ of $A$ by 
\[Z(A) := \{ a \in A : a, b \text{ commute for any } b \in A \}.\]

For any $a \in A$, there is a linear transformation $L(a)$, which is linear in $a$, such that $a * b = L(a)b$ for any $b \in A$.
For $a, b \in A$, we define
\[ \tau(a, b) = \mathrm{tr} L(a*b).\]
Then $\tau(a,b)$ is a symmetric bilinear form on the vector space $A$.
We define the \emph{radical} of the Jordan algebra $A$ by
\[rad A := \{ a \in A : \tau(a, x) = 0 \text{ for every $x \in A$}\}.\]
We call the Jordan algebra $A$ \emph{semisimple} if $rad A = \{0\}$, i.e. if $\tau(a,b)$ is non-singular.
A Jordan algebra is called \emph{formally real} if $a^{*2} + b^{*2} = 0$ implies $a=b=0$.
Any formally real Jordan algebra is semisimple (See \cite[Corollary 5 in Chapter VI]{minnesota}).

Let $(X, S)$ be a Jordan scheme and let $S = \{A_{0}=I, A_{1}, \dots, A_{d}\}$.
Then $\R S := \bigoplus_{i = 0}^{d} \R A_{i}$ is closed under the Jordan product, $A_{i} * A_{j} = \frac{1}{2} \left( A_{i} A_{j} + A_{j} A_{i}\right)$.
Thus, $\R S$ forms a special Jordan algebra, called the (real) {\it adjacency Jordan algebra} of the Jordan scheme $(X,S)$.

\begin{thm} \cite[p.325]{MRK}
	Let $\R S$ be the adjacency Jordan algebra of a Jordan scheme $(X,S)$. 
	Then $\R S$ is formally real and so semisimple.
\end{thm}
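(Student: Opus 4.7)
The key observation that drives the entire proof is that every element of $\R S$ is a real symmetric matrix, since each generator $A_i$ is a symmetric $\{0,1\}$-matrix by hypothesis. Thus $\R S$ sits inside the associative algebra of real symmetric matrices, and the Jordan product there has the following pleasant feature: for any symmetric $a$, Lemma \ref{AB=BA}(2) gives $a * a = a \cdot a = a^2$, because $a$ commutes with itself. Hence $a^{*2}$ agrees with the ordinary matrix square $a^2$ for every $a \in \R S$.

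With this in hand, the plan for formal reality is a short trace argument. Suppose $a, b \in \R S$ satisfy $a^{*2} + b^{*2} = 0$. By the observation above this reads $a^2 + b^2 = 0$ as an identity of real symmetric matrices. I would then apply $\mathrm{tr}$ to both sides to obtain
\[
\mathrm{tr}(a^2) + \mathrm{tr}(b^2) = 0.
\]
Because $a$ is symmetric, $\mathrm{tr}(a^2) = \mathrm{tr}({}^t a\, a) = \sum_{x,y \in X} a_{xy}^2 \ge 0$, the squared Frobenius norm, with equality if and only if $a = 0$; the same goes for $b$. Since both summands are non-negative and sum to $0$, both vanish, forcing $a = b = 0$. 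This is exactly the formally real condition.

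Finally, semisimplicity follows without further work: the excerpt quotes \cite[Corollary 5 in Chapter VI]{minnesota}, which states that every formally real Jordan algebra is semisimple, so $\mathrm{rad}\,\R S = \{0\}$ automatically.

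I do not foresee a serious obstacle here; the main conceptual point is simply to recognize that on symmetric matrices the Jordan square coincides with the ordinary matrix square, after which the Frobenius-norm positivity of $\mathrm{tr}(a^2)$ does all the work. The slightly subtle aspect, worth stating explicitly in the write-up, is that $\R S$ is a Jordan subalgebra of the special algebra $M_X(\R)^+$ of \emph{real} matrices (not merely complex ones), since that is what makes $\mathrm{tr}(a^2)$ a sum of squares of real numbers rather than of complex numbers; the symmetry hypothesis in the definition of Jordan scheme is what secures this.
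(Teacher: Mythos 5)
Your proof is correct: since $a^{*2}=a^2$ for the Jordan product induced from matrix multiplication and every element of $\R S$ is a real symmetric matrix, the Frobenius-norm identity $\mathrm{tr}(a^2)=\sum_{x,y}a_{xy}^2\ge 0$ immediately gives formal reality, and semisimplicity then follows from the cited corollary. The paper itself gives no proof, quoting \cite[p.325]{MRK} instead, and your trace argument is exactly the standard one used there, so there is nothing to object to.
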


A Jordan algebra $A$ is said to be the {\it direct sum} of the subalgebras $I_{1}, I_{2}, \dots, I_{r}$, symbolically $A = I_{1} \oplus I_{2} \oplus \cdots \oplus I_{r}$, 
if $A$ is the direct sum of the vector spaces $I_{1}, I_{2}, \dots, I_{r}$, and if $I_{i} * I_{j} = \{0\}$ holds for $i \neq j$.
Each subalgebra $I_{i}$ is called a {\it direct summand} of $A$.
The algebra $A$ is called {\it indecomposable} if there is no direct sum decomposition of $A$ with two non-trivial subalgebras.
$A$ is called a {\it simple} algebra if $A$ is semisimple and indecomposable.
An ideal $I \neq \{0\}$ in $A$ is called {\it minimal} if $I' \subset I$ for an ideal $I'$ of $A$ implies $I' = \{0\}$ or $I' = I$.

Let $V$ be an $\R$-vector space which is equipped with a positive definite symmetric bilinear form $f$.
We set that the vector space $\R \oplus V$ which is a direct sum of a one dimensional space $\R$ with basis $1$ and $V$.
We define a product in $\R \oplus V$ by 
\[(\alpha 1 + \bm{x}) * (\beta 1 + \bm{y}) := (\alpha \beta + f(\bm{x}, \bm{y})) 1 + (\beta \bm{x} + \alpha \bm{y})\]
for $\alpha, \beta \in \R, \bm{x},\bm{y} \in V$.
Then $(\R \oplus V, *)$ is a formally real Jordan algebra and denoted by $\R \oplus_f V$.
It is well known that $\R \oplus_f V$ is simple if $\dim V > 1$.

\begin{thm} \cite[Theorem 11 in Chapter III]{minnesota}
	Every semisimple Jordan algebra $A$ is a direct sum of simple Jordan algebras.
	These direct summands are exactly the minimal ideals of $A$.
	Two decompositions of a semisimple Jordan algebra into direct sums of simple Jordan algebras are equal up to a permutation of the summands.
\end{thm}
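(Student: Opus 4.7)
The plan is to exploit the non-degeneracy of the trace form $\tau$, which is exactly what semisimplicity provides. The first technical ingredient I would establish is the associativity identity $\tau(a*b, c) = \tau(a, b*c)$ for all $a,b,c \in A$; this follows from a standard calculation with the multiplication operators $L(\cdot)$ in a Jordan algebra combined with cyclic invariance of the trace. With this in hand, for any ideal $I$ of $A$ the orthogonal complement $I^\perp := \{a \in A : \tau(a,x) = 0 \text{ for all } x \in I\}$ is itself an ideal: if $a \in I^\perp$, $b \in A$, and $x \in I$, then $\tau(a*b, x) = \tau(a, b*x) = 0$ since $b*x \in I$.

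The second step is to show $I \cap I^\perp = \{0\}$. Set $N := I \cap I^\perp$; this is an ideal on which $\tau$ vanishes identically on $N \times N$. One argues that such an ideal is necessarily nilpotent, and that nilpotent ideals are contained in $\mathrm{rad}\,A$, which is zero by semisimplicity. Hence $N = \{0\}$, and dimension counting yields $A = I \oplus I^\perp$ as a direct sum of ideals. This sum respects the Jordan product because $I * I^\perp \subseteq I \cap I^\perp = \{0\}$.

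From the orthogonal-complement decomposition, the existence part of the statement follows by induction on $\dim A$: pick any minimal ideal $I_1$, split off $A = I_1 \oplus I_1^\perp$, observe that $I_1^\perp$ is again semisimple (the trace form remains non-degenerate on it, since the two summands are $\tau$-orthogonal and $\tau$ is non-degenerate on $A$), and iterate. Each minimal summand $I_i$ is automatically simple: any ideal of $I_i$ is in fact an ideal of $A$ via $I_i * I_j = \{0\}$ for $j \neq i$, so minimality of $I_i$ inside $A$ forces this ideal to be $\{0\}$ or $I_i$. For uniqueness, given two decompositions $A = \bigoplus_i I_i = \bigoplus_k J_k$ into simple summands, each $J_k$ splits as $\bigoplus_i (J_k \cap I_i)$ with each intersection an ideal of $A$; simplicity of $J_k$ forces exactly one intersection to equal $J_k$, producing a bijection between the two families.

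The main obstacle will be the step $I \cap I^\perp = \{0\}$: the vanishing of $\tau$ on $N \times N$ does not immediately place $N$ inside $\mathrm{rad}\,A$, which is defined by degeneracy of $\tau$ against all of $A$. One therefore has to invoke genuine Jordan-algebra machinery to promote internal degeneracy on $N$ into global degeneracy on $N \times A$, typically via the Peirce decomposition or via the identification of $\mathrm{rad}\,A$ with the maximal solvable (equivalently, maximal nilpotent) ideal. All the remaining steps are linear-algebra and lattice-of-ideals manipulations that I expect to be routine once this structural input is in place.
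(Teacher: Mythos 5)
The paper does not prove this statement at all: it is quoted verbatim from Koecher's Minnesota Notes (Theorem~11 in Chapter~III) and used as a black box, so there is no in-paper argument to compare against. Your outline is the standard proof from that circle of ideas --- associativity of the trace form $\tau(a*b,c)=\tau(a,b*c)$ (which holds because $L$ of an associator is a commutator of multiplication operators, hence traceless), the resulting fact that $I^{\perp}$ is an ideal, the splitting $A=I\oplus I^{\perp}$, induction on dimension for existence, and the Peirce/idempotent bookkeeping $J_k=\bigoplus_i(J_k\cap I_i)$ for uniqueness. All of those steps are correct as you state them; in particular the uniqueness argument is fine once one knows each simple summand $I_i$ has a unit $e_i$ with $1=\sum_i e_i$, so that $J_k=\sum_i e_i*J_k$ with $e_i*J_k\subseteq I_i\cap J_k$.

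The one genuine gap is exactly the one you flag yourself: $I\cap I^{\perp}=\{0\}$. Vanishing of $\tau$ on $N\times N$ for $N=I\cap I^{\perp}$ does not by itself put $N$ inside $\operatorname{rad}A=\{a:\tau(a,A)=0\}$, and your phrase ``one argues that such an ideal is necessarily nilpotent'' is a placeholder for the real work. The standard way to close it is to show that for $n$ in a $\tau$-isotropic ideal the operator $L(n)$ (more precisely, $L(n^{*k})$ for suitable powers) has trace zero against everything, forcing $n\in\operatorname{rad}A$; equivalently one identifies $\operatorname{rad}A$ with the maximal nil (or solvable) ideal and checks that a $\tau$-isotropic ideal is nil. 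This requires genuine Jordan-algebra identities (power associativity and the behaviour of $\tau$ on powers), not just linear algebra, so as written your proposal is an accurate and honest skeleton rather than a complete proof. Since the paper itself delegates the entire statement to \cite{minnesota}, supplying that missing lemma (or simply citing it) is all that separates your sketch from a full argument.
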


The classification of simple formally real Jordan algebras over $\R$ was given by Jordan, von Neumann and Wigner (see \cite{JNW}).
Let $\C$, $\mathbb{H}$ and $\mathbb{O}$ denote the field of complex numbers, the quaternion algebra and the octonion algebra, respectively.
Let $M_n(\mathcal{D})$ denote the set of all $n \times n$ matrices with entries in $\mathcal{D}$.
Then every simple formally real Jordan algebra is isomorphic to one of the following:
\begin{enumerate}
  \item $\R$,
  \item $\R \oplus_f V (\dim V \geq 2)$,
  \item $H_n(\mathcal{D}) := \{A \in M_n(\mathcal{D}) : {}^t \bar{A} = A\} (n \geq 3)$, where $\mathcal{D} = \R, \C, \mathbb{H}$ and $\bar{A}$ is the standard conjugation in $\mathcal{D}$,
  \item $H_3(\mathbb{O})$.
\end{enumerate}

\begin{remark}
  Since $H_3(\mathbb{O})$ is not special, it does not occur as a simple component of the adjacency Jordan algebra of any Jordan scheme.
\end{remark}

If $A$ is semisimple, then $A$ has the unit element $1$.
An element $e \in A$ is called an \emph{idempotent} if $e * e = e$ and $e \neq 0$.
A subset $I \neq \{0\}$ of $A$ is an ideal in $A$ if and only if there is an idempotent $e$ in $Z(A)$ such that $I = e * A$.
Let $A = I_{1} \oplus \cdots \oplus I_{r}$ be a direct sum decomposition and let $I_{i} = e_{i} * A$, where $e_{i} * e_{i} = e_{i} \in Z(A)$.
The unit element $1$ of $A$ is given by $1 = e_{1} + \cdots + e_{r}$, and $e_{i} * e_{j} = \delta_{i,j} e_{i}$.

For a subset $T \subset M_n(\C)$, we define $\mathrm{Alg}(T)$ the smallest associative algebra containing $T$ and $\mathrm{WL}(T)$ the smallest coherent algebra containing $T$ (we refer to \cite{MRK} on coherent algebras).
Also we define $\mathrm{Sym} : M_n(\C) \to M_n(\C)$ by $\mathrm{Sym}(a) = a + {}^t a$.

We set $\C S := \sum_{i = 0}^{d} \C A_i$ as a $\C$-vector space for a Jordan scheme $(X, S)$.
By definition, we have the following inclusions:
\[\C S \subset \mathrm{Alg}(\C S) \subset \mathrm{WL}(\C S), \quad \C S \subset \mathrm{Sym}(\mathrm{Alg}(\C S)) \subset \mathrm{Sym}(\mathrm{WL}(\C S)).\]

\begin{lem}
  A Jordan scheme $(X, S)$ is an association scheme if and only if $\C S = \mathrm{Alg}(\C S)$.
\end{lem}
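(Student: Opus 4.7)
The plan is to verify both implications, the forward direction being essentially tautological and the reverse direction requiring a short argument that the linear coefficients in the expansion of $A_iA_j$ over $S$ are automatically non-negative integers.

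For the forward direction, I would assume $(X,S)$ is a symmetric association scheme. Then condition (3) of the definition of association scheme states exactly that each ordinary product $A_iA_j$ lies in $\C S$. Hence $\C S$ is closed under matrix multiplication, so $\mathrm{Alg}(\C S)\subseteq \C S$, and the reverse inclusion is trivial from the definition of $\mathrm{Alg}$.

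For the reverse direction, assume $\C S=\mathrm{Alg}(\C S)$. Since $\mathrm{Alg}(\C S)$ contains every monomial in the $A_i$, we have $A_iA_j\in \C S$ for all $i,j$, so we may write
\[
A_iA_j=\sum_{k=0}^{d} c_{ij}^{k}A_k
\]
for some $c_{ij}^{k}\in \C$. The key observation is that the matrices $A_0,A_1,\dots,A_d$ are $\{0,1\}$-matrices satisfying $\sum_k A_k=J$, so their supports partition $X\times X$; therefore for any $(x,y)$ with $(A_k)_{xy}=1$ the above identity forces $c_{ij}^{k}=(A_iA_j)_{xy}$. But $A_iA_j$ has non-negative integer entries, so each $c_{ij}^{k}$ is a non-negative integer. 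This gives condition (3) of an association scheme. Condition (1) $\sum_k A_k=J$ is part of the Jordan scheme axioms, condition (2) holds trivially with $i^{*}=i$ because every $A_i\in S$ is symmetric, and $A_0=I$ is given. Hence $(X,S)$ is an association scheme.

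There is no serious obstacle here; the argument is essentially bookkeeping. The only nontrivial point worth highlighting in the write-up is that closure of $\C S$ under the ordinary product (not merely under the Jordan product) is equivalent to closure as an algebra, because $\mathrm{Alg}(\C S)$ is generated by $\C S$; and that, once closure under the ordinary product is granted, the integrality and non-negativity of the structure constants $p_{ij}^{k}$ come for free from the fact that $S$ consists of $\{0,1\}$-matrices whose supports partition $X\times X$.
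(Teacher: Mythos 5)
Your proof is correct; the paper states this lemma without proof, treating it as immediate, and your argument is exactly the standard one that would be intended: the forward direction is closure of $\C S$ under the ordinary product, and the reverse direction recovers non-negative integrality of the structure constants from the fact that the $\{0,1\}$-matrices in $S$ have disjoint supports partitioning $X\times X$ (since they sum to $J$), so each coefficient equals an entry of the non-negative integer matrix $A_iA_j$. No gaps.
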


\begin{lemma} \cite[Proposition 1.6]{MRK} \label{improper}
  A Jordan scheme $(X, S)$ is improper if and only if $\C S = \mathrm{Sym}(\mathrm{WL}(\C S))$.
\end{lemma}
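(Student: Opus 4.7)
The plan is to compare the coherent closure $\mathrm{WL}(\C S)$ with the Bose--Mesner algebra of a suitable association scheme, using that each $A_i \in S$ is symmetric, so $A_i = \mathrm{Sym}(\tfrac{1}{2} A_i)$ and consequently $\C S \subset \mathrm{Sym}(\mathrm{WL}(\C S))$ is automatic.

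For the ``only if'' direction, assume $(X, S)$ is improper, i.e.\ $S = \tilde{T}$ for some association scheme $(X, T)$. The Bose--Mesner algebra $\C T$ is itself a coherent algebra containing $\C S$, hence $\mathrm{WL}(\C S) \subset \C T$. Applying $\mathrm{Sym}$ gives
\[
\C S \subset \mathrm{Sym}(\mathrm{WL}(\C S)) \subset \mathrm{Sym}(\C T),
\]
and a direct check using $\mathrm{Sym}(B_i) = B_i + B_{i^*}$ on the standard basis of $(X,T)$ yields $\mathrm{Sym}(\C T) = \C \tilde{T} = \C S$, so equality propagates throughout the chain.

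For the ``if'' direction, assume $\C S = \mathrm{Sym}(\mathrm{WL}(\C S))$, and let $T = \{B_0, \ldots, B_e\}$ be the standard $\{0,1\}$-basis of the coherent algebra $\mathrm{WL}(\C S)$. A priori $(X, T)$ is only a coherent configuration; its reflexive basis elements $I_{F_1}, \ldots, I_{F_k}$ (indexed by the fibers) are symmetric and lie in $\mathrm{WL}(\C S)$, hence in $\mathrm{Sym}(\mathrm{WL}(\C S)) = \C S$. But the only diagonal elements of $\C S$ are scalar multiples of $I$, which forces $k = 1$, so $(X, T)$ is in fact an association scheme. Its symmetrization $\tilde{T}$ is a collection of symmetric $\{0,1\}$-matrices with pairwise disjoint supports summing to $J$, and by the computation used in the first direction $\C \tilde{T} = \mathrm{Sym}(\C T) = \C S$. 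Two collections of $\{0,1\}$-matrices with pairwise disjoint supports summing to $J$ and equal $\C$-span must coincide (both are the unique set of $\{0,1\}$-atoms of that span), so $S = \tilde{T}$ and $(X, S)$ is improper.

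The only subtle step is verifying in the converse that $\mathrm{WL}(\C S)$ is homogeneous, i.e.\ the Bose--Mesner algebra of an association scheme rather than merely of a coherent configuration. The hypothesis $\C S = \mathrm{Sym}(\mathrm{WL}(\C S))$ is used precisely at this point to transfer the property ``$I$ is the unique diagonal $\{0,1\}$-matrix of the system'' from $S$ into $\mathrm{WL}(\C S)$; once this is in hand, everything else is a routine comparison of $\{0,1\}$-bases.
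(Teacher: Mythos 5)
Your argument is correct and complete: the paper itself gives no proof of this lemma, citing it directly from \cite[Proposition 1.6]{MRK}, so there is nothing internal to compare against; your proof is essentially the standard one. Both directions check out --- the inclusion $\C S \subset \mathrm{Sym}(\mathrm{WL}(\C S))$ from symmetry of the $A_i$, the minimality of $\mathrm{WL}(\C S)$ inside the Bose--Mesner algebra $\C T$ for the ``only if'' part, and, for the converse, the use of the hypothesis to force the diagonal basis elements of the coherent closure into $\C S$ (hence homogeneity) followed by the mutual-refinement argument identifying $S$ with $\tilde{T}$. The one step deserving the explicit justification you gave is the last one: two partitions of $J$ into $\{0,1\}$-matrices with the same span are mutual coarsenings of each other, hence equal.
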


\begin{corollary}
  A Jordan scheme $(X, S)$ is proper if $\C S \neq \mathrm{Sym}(\mathrm{Alg}(\C S))$.
\end{corollary}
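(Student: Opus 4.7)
The plan is to prove the contrapositive: assume $(X,S)$ is improper and deduce $\C S = \mathrm{Sym}(\mathrm{Alg}(\C S))$. The key tool is Lemma \ref{improper}, which gives the characterization $\C S = \mathrm{Sym}(\mathrm{WL}(\C S))$ for improper Jordan schemes, together with the inclusion chain already recorded in the paragraph preceding the statement.

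First I would write down the two inclusion chains
\[
\C S \;\subset\; \mathrm{Sym}(\mathrm{Alg}(\C S)) \;\subset\; \mathrm{Sym}(\mathrm{WL}(\C S)),
\]
noting that $\mathrm{Sym}$ is monotone with respect to inclusions of matrix sets, which is why the second inclusion follows from $\mathrm{Alg}(\C S) \subset \mathrm{WL}(\C S)$. The left inclusion is immediate because each $A_i \in S$ is symmetric, hence $A_i = \tfrac{1}{2}\mathrm{Sym}(A_i) \in \mathrm{Sym}(\mathrm{Alg}(\C S))$ (or, more cleanly, because the inclusion $\C S \subset \mathrm{Sym}(\mathrm{WL}(\C S))$ displayed just above the lemma factors through $\mathrm{Sym}(\mathrm{Alg}(\C S))$).

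Next, assuming $(X,S)$ is improper, Lemma \ref{improper} yields $\C S = \mathrm{Sym}(\mathrm{WL}(\C S))$. Substituting this equality into the right-hand side of the chain above forces both inclusions to be equalities, and in particular
\[
\C S = \mathrm{Sym}(\mathrm{Alg}(\C S)).
\]
Contrapositively, if $\C S \neq \mathrm{Sym}(\mathrm{Alg}(\C S))$, then $(X,S)$ cannot be improper, hence is proper, which is exactly the claim.

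There is no real obstacle here; the statement is a one-line consequence of Lemma \ref{improper} combined with the inclusion $\mathrm{Alg}(\C S) \subset \mathrm{WL}(\C S)$. The only thing worth being careful about is verifying that $\mathrm{Sym}$ preserves inclusions (immediate from its definition as $a \mapsto a + {}^t a$ applied elementwise) so that the sandwiching argument goes through.
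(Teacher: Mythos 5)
Your proof is correct and follows exactly the route the paper intends: the corollary is stated without proof, but the displayed inclusions $\C S \subset \mathrm{Sym}(\mathrm{Alg}(\C S)) \subset \mathrm{Sym}(\mathrm{WL}(\C S))$ in the preceding paragraph, combined with Lemma~\ref{improper}, give precisely the sandwiching/contrapositive argument you wrote out. No issues.
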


%%%%%%%%%%%%%%%%%%%%%%%%%%%%%%%%%%%%%%
\section{A construction of a Jordan scheme from an elementary abelian $2$-group and a $\{1,-1\}$-matrix satisfying a specified condition}
In this section, we construct a new Jordan scheme from an elementary abelian $2$-group and  a $\{1,-1\}$-matrix satisfying a specified condition.
In the next section, we will prove that the order of such a matrix is limited to $2,4$ or $8$.

Let $I_{2}$ and $J_{2}$ denote the identity matrix and the all-one matrix of order $2$, respectively, and set $K_{2} = J_{2} - I_{2}$.

Let $G$ be an elementary abelian $2$-group of rank $n$.
We assume that there exists a $\{1, -1\}$-matrix $M$ indexed by $G$ with the following condition; for any $x,y,z \in G$ with $x \neq y$,
\[ M_{x, xz} M_{x,yz} M_{y,xz} M_{y,yz} = -1.\]
\[
\bordermatrix{
 &&xz&&yz &  \cr
 && \vdots && \vdots &  \cr
x & \cdots & * & \cdots & * & \cdots  \cr
 &  & \vdots &  & \vdots  &  \cr
 y&\cdots& * &\cdots& * & \cdots  \cr
&&\vdots && \vdots &  \cr
}
\]

We set $H := C_2 \times G$, where $C_2 := \langle h \rangle$ is the group of order $2$.
Then we define $3 |G|+1$ matrices of degree $4|G|$ as follows; for $(s, t) \in H = C_2 \times G$,
\[\sigma_{(s,t)} := \sum_{g \in G} E_{(1,g),(s,gt)} \otimes L_{g,gt} + \sum_{g \in G} E_{(h,g),(hs,gt)} \otimes L_{gt, g}, \]
where $E_{(s,t),(u,v)}$ denotes the matrix unit for $(s,t),(u,v) \in H$ in $M_{H}(\C)$ and for any $x,y \in G$, $L_{x,y}$ is defined by
\begin{align*}
  L_{x,y} := \begin{cases}
    I_2 & \text{if $s=1, t= 1$,} \\
    J_2 & \text{if $s=1, t \neq 1$,} \\
    I_2 & \text{if $s \neq 1$ and $M_{x,y} = 1$,} \\
    K_2 & \text{if $s \neq 1$ and $M_{x,y} = -1$.} \\
  \end{cases}
\end{align*}
And we define $\sigma_{(s,t)}'$ as the expression in which $L_{g,gt}$ and $L_{gt,g}$ in the definition of $\sigma_{(s,t)}$ are replaced with $L_{g,gt}K_2$ and $L_{gt,g}K_2$, respectively.
We remark that $\sigma_{(1,t)} = \sigma_{(1,t)}'$ if $t \neq 1$.

We set $S := \{\sigma_{(1,1)}, \sigma_{(1,1)}'\} \cup \{\sigma_{(1,t)}\}_{1 \neq t \in G} \cup \{\sigma_{(h,t)}, \sigma_{(h,t)}' \}_{t \in G}$.
\begin{thm}
$S$ is a Jordan scheme.
\end{thm}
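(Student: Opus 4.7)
The plan is to verify the two defining properties of a Jordan scheme for $S$: that $S$ consists of pairwise disjoint symmetric $\{0,1\}$-matrices summing to $J$ with $\sigma_{(1,1)} = I$, and that every Jordan product of two elements of $S$ lies in the $\frac{1}{2}\Z$-span of $S$. The most efficient setup is to regard each matrix as a $2 \times 2$ block matrix indexed by the $C_{2}$-component of $H = C_{2} \times G$, each inner block lying in $M_{G}(\C) \otimes M_{2}(\C)$. Under this decomposition, $\sigma_{(1,t)}$ and $\sigma'_{(1,1)}$ are block-diagonal while $\sigma_{(h,t)}$ and $\sigma'_{(h,t)}$ are block-off-diagonal. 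Writing $\pi_{t} := \sum_{g} E_{g, gt}$ and defining $\Phi(1) := I_{2}$, $\Phi(-1) := K_{2}$, the nontrivial off-diagonal block of $\sigma_{(h,t)}$ is $X_{t}^{+} := \sum_{g} E_{g, gt} \otimes \Phi(M_{g, gt})$, and that of $\sigma'_{(h,t)}$ is $X_{t}^{-} := \sum_{g} E_{g, gt} \otimes \Phi(-M_{g, gt})$. Symmetry of each $\sigma$ follows because the change of variable $g \mapsto gt$ identifies each $X_{t}^{\pm}$ with its own transpose.

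The basic combinatorial properties---pairwise disjoint $\{0,1\}$-supports and sum equal to $J$---reduce block by block to the identities $I_{2} + K_{2} = J_{2}$, $\Phi(+1) + \Phi(-1) = J_{2}$, and $\sum_{t \in G} \pi_{t} = J_{|G|}$, while $\sigma_{(1,1)} = I$ is immediate.

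For Jordan closure, the computations split into three cases according to block type. If both factors are block-diagonal, products reduce to $\pi_{t} \pi_{t'} = \pi_{t t'}$ combined with multiplication inside $\{I_{2}, J_{2}, K_{2}\}$ (the relations $J_{2}^{2} = 2 J_{2}$, $K_{2} J_{2} = J_{2}$, $K_{2}^{2} = I_{2}$), and the one subtle point is the identity $I_{2|G|} \otimes J_{2} = \sigma_{(1,1)} + \sigma'_{(1,1)}$, which accounts for the $J_{2}$ factors appearing on the block diagonal in a product such as $\sigma_{(1,t)} \sigma_{(1,t)}$. For a mixed product (one factor of each type), one uses $J_{2} \Phi(\pm 1) = J_{2}$ and $K_{2} \Phi(\varepsilon) = \Phi(-\varepsilon)$; a representative outcome is $\sigma_{(1,t)} * \sigma_{(h,t')} = \sigma_{(h, tt')} + \sigma'_{(h, tt')}$ for $t \neq 1$.

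The heart of the argument, and the only step invoking the hypothesis on $M$, is the product of two block-off-diagonal matrices. A direct computation gives $X_{t}^{+}(X_{t'}^{+})^{t} = \sum_{g} E_{g, g(tt')} \otimes \Phi\bigl(M_{g, gt}\, M_{g(tt'), gt}\bigr)$, and the opposite-order product $X_{t'}^{+}(X_{t}^{+})^{t}$ is obtained by interchanging $t$ and $t'$. Setting $u := tt'$, the two $\Phi$-arguments at position $(g, gu)$ multiply to $M_{g, gt}\, M_{gu, gt}\, M_{g, gt'}\, M_{gu, gt'}$; because $G$ has exponent $2$ one has $gt' = gut$, so this product is precisely $M_{x, xz}\, M_{x, yz}\, M_{y, xz}\, M_{y, yz}$ with $x = g$, $y = gu$, $z = t$. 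The hypothesis on $M$ forces it to equal $-1$ whenever $u \neq 1$, so the two $\Phi$-values are $\{I_{2}, K_{2}\}$ in some order and sum to $J_{2}$. The same cancellation occurs in the bottom-right block, and the remaining sub-cases (replacing one or both factors by their primed versions) are handled identically, producing $\frac{1}{2}\sigma_{(1, tt')}$ whenever $t \neq t'$. The degenerate case $t = t'$ collapses to $\sigma_{(1,1)}$ or $\sigma'_{(1,1)}$ via $\Phi(\pm 1)^{2} = I_{2}$. The main obstacle is thus the recognition of this four-fold sign cancellation as the content of the combinatorial hypothesis on $M$; once it is in hand, the Jordan scheme axioms follow by direct verification.
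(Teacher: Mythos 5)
Your proposal is correct and takes essentially the same route as the paper's own proof: a direct case-by-case computation of all Jordan products in block form, with the hypothesis on $M$ used exactly once, to force the two cross-terms in $\sigma_{(h,t)} * \sigma_{(h,t')}$ ($t \neq t'$) to be $I_2$ and $K_2$ in some order and hence sum to $J_2$. Your resulting structure constant $\frac{1}{2}\sigma_{(1,tt')}$ is consistent with the row sums (each $\sigma_{(h,t)}$ is a symmetric permutation matrix while $\sigma_{(1,tt')}$ has valency $2$), so the verification goes through; only your passing remark that the change of variable $g \mapsto gt$ identifies $X_t^{\pm}$ with \emph{its own} transpose is imprecise (it identifies the lower-left block with the transpose of the upper-right one, which is all that symmetry of $\sigma_{(h,t)}$ requires).
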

\begin{proof}
  It is easy to check that $\sigma_{(1,1)}$ is the identity matrix and the sum of all matrices in $S$ is equal to the all-one matrix.
  By direct culculation, we have that
  \begin{align*}
    \sigma_{(1,1)}' * \sigma_{(1,1)}' &= \sigma_{(1,1)}, &\sigma_{(1,1)}' * \sigma_{(1,t)} &= \sigma_{(1,t)},\\
    \sigma_{(1,1)}' * \sigma_{(h,t)} &= \sigma_{(h,t)}', &\sigma_{(1,1)}' * \sigma_{(h,t)}' &= \sigma_{(h,t)},\\
    \sigma_{(1,t)} * \sigma_{(1,v)} &= \begin{cases} 2 \sigma_{(1,1)} + 2 \sigma_{(1,1)}'  & \text{ if $t=v$,} \\ 2 \sigma_{(1,tv)} & \text{ if $t \neq v$,}\end{cases} &\sigma_{(1,t)} * \sigma_{(h,v)} &= \sigma_{(h,tv)} + \sigma_{(h,tv)}',\\
    \sigma_{(1,t)} * \sigma_{(h,v)}' &= \sigma_{(h,tv)} + \sigma_{(h,tv)}'.
  \end{align*} 
  Moreover, it holds that
  \begin{equation*}
    \begin{split}
    2\sigma_{(h,t)} * \sigma_{(h,v)} &= \left( \sum_{k \in G} E_{(1,k),(h,kt)} \otimes L_{k,kt} + \sum_{k \in G} E_{(h,k),(1,kt)} \otimes L_{kt,k}\right) \\ 
    &\qquad \times \left( \sum_{\ell \in G} E_{(1,\ell),(h,\ell v)} \otimes L_{\ell, \ell v} + \sum_{\ell \in G} E_{(h,\ell),(1, \ell v)} \otimes L_{\ell v, \ell}\right) \\
    &\quad + \left( \sum_{\ell \in G} E_{(1,\ell),(h,\ell v)} \otimes L_{\ell, \ell v} + \sum_{\ell \in G} E_{(h,\ell),(1, \ell v)} \otimes L_{\ell v, \ell}\right) \\
    &\qquad \times \left( \sum_{k \in G} E_{(1,k),(h,k t)} \otimes L_{k, k t} + \sum_{k \in G} E_{(h,k),(1, k t)} \otimes L_{kt,k}\right) \\
    &= \sum_{k \in G} E_{(1,k),(1,ktv)} \otimes L_{k,kt} L_{ktv, kt} + \sum_{k \in G} E_{(h,k),(h,ktv)} \otimes L_{kt,k} L_{kt,ktv} \\
    &\qquad + \sum_{k \in G} E_{(1,k),(1,ktv)} \otimes L_{k,kv} L_{ktv, kv} + \sum_{k \in G} E_{(h,k),(h,ktv)} \otimes L_{kv,k} L_{kv,ktv} \\
    &= \sum_{k \in G} E_{(1,k),(1,ktv)} \otimes \left( L_{k,kt} L_{ktv, kt} + L_{k,kv} L_{ktv, kv} \right) \\
    &\qquad + \sum_{k \in G} E_{(h,k),(h,ktv)} \otimes \left( L_{kt,k} L_{kt,ktv} + L_{kv,k} L_{kv,ktv} \right) .
  \end{split}
  \end{equation*}
  If $t = v$, we know that
  \begin{align*}
 2 \sigma_{(h,t)} * \sigma_{(h,v)} &= \sum_{k \in G} E_{(1,k),(1,k)} \otimes \left( L_{k,kt} L_{k, kt} + L_{k,kt} L_{k, kt} \right) + \sum_{k \in G} E_{(h,k),(h,k)} \otimes \left( L_{kt,k} L_{kt,k} + L_{kt,k} L_{kt,k} \right) \\
  &=2 \sigma_{(1,1)}.
  \end{align*}
  If $t \neq v$, considering the definition of $L_{x,y}$, both of the numbers of $I_2$ and $K_2$ in $\{L_{k,kt}, L_{ktv,kt}, L_{k,kv}, L_{ktv,kv}\}$ are odd.
  Similarly, both of the numbers of $I_2$ and $K_2$ in $\{L_{kt,k}, L_{kt,ktv}, L_{kv,k}, L_{kv,ktv}\}$ are also odd. 
  Therefore we obtain that
  \begin{align*}
      2 \sigma_{(h,t)} * \sigma_{(h,v)} &= \sum_{k \in G} E_{(1,k),(1,ktv)} \otimes \left( I_2 + K_2 \right) + \sum_{k \in G} E_{(h,k),(h,ktv)} \otimes \left( I_2 + K_2 \right) \\ &= \sigma_{(1,tv)} + \sigma_{(1,tv)}'.
  \end{align*}
  Thus, we have that 
  \[\sigma_{(h,t)} * \sigma_{(h,v)} = \begin{cases} \sigma_{(1,1)} & \text{ if $t=v$,} \\ \frac{1}{2} \sigma_{(1,tv)} + \frac{1}{2} \sigma_{(1,tv)}' & \text{ if $t \neq v$.} \end{cases}\\    \]
    Similarly, we can show that
  \begin{align*}
    \sigma_{(h,t)} * \sigma_{(h,v)}' &= \begin{cases} \sigma_{(1,1)}' & \text{ if $t=v$,} \\ \frac{1}{2} \sigma_{(1,tv)} + \frac{1}{2} \sigma_{(1,tv)}' & \text{ if $t \neq v$,} \end{cases}\\
    \sigma_{(h,t)}' * \sigma_{(h,v)}' &= \begin{cases} \sigma_{(1,1)} & \text{ if $t=v$,} \\ \frac{1}{2} \sigma_{(1,tv)} + \frac{1}{2} \sigma_{(1,tv)}' & \text{ if $t \neq v$.} \end{cases}
  \end{align*}
\end{proof}

%%%%%%%%%%%%%%%%%%%%%%%%%%%%%
\section{The existence of some $\{1,-1\}$-matrices with the specified condition}
In this section, we prove that the order of a $\{1,-1\}$-matrix $M$, considering in the previous section, is limited to $2, 4$ or $8$.
We begin by recalling the definition of the matrix $M$.
Let $G$ be an elementary abelian $2$-group of rank $n$.
We assume that there exists a $\{1, -1\}$-matrix $M$ with the following condition indexed by $G$; for any $x,y,z \in G$ with $x \neq y$, 
\[M_{x, xz} M_{x,yz} M_{y,xz} M_{y,yz} = -1.\]

For any $c \in G$, we define a polynomial $f_c$ by
\[f_c := \sum_{g \in G} M_{g, gc} ~ x_g y_{gc}.\]

Then by direct calculation, we have that
\begin{align*}
  \sum_{c \in G} {f_c}^2 &= \sum_{c \in G} \left( \sum_{g \in G} M_{g, gc} ~x_g y_{gc} \right) \left( \sum_{h \in G} M_{h, hc} ~x_h y_{hc} \right) \\
  &=\sum_{c, g, h \in G} M_{g, gc} M_{h,hc} ~x_g x_h y_{gc} y_{hc} \\
  &=\sum_{g, c \in G} {M_{g,gc}}^2~ {x_g}^2 ~{y_{gc}}^2 + \sum_{c, g, h \in G, g \neq h} M_{g, gc} M_{h,hc} ~x_g x_h y_{gc} y_{hc} \\
  &=\sum_{g, h \in G} {x_g}^2~ {y_h}^2 + \sum_{c, g, h \in G, g \neq h} M_{g, gc} M_{h,hc} ~x_g x_h y_{gc} y_{hc}.
\end{align*} 

We consider the second summation in the above line.
We fix two different elements $g, h \in G$.
For any $c \in G$, there exists the unique element $c' \in G$ such that $gc' = hc$.
We know $c \neq c'$ because of $g \neq h$ and it is easy to check $(c')'=c$.
Then it follows that $x_g x_h y_{gc} y_{hc} = x_g x_h y_{gc'} y_{hc'}$ and
\[ M_{g,gc} M_{h,hc} + M_{g,gc'} M_{h,hc'} = M_{g,gc} M_{h,hc} + M_{g,hc} M_{h,gc} = 0.\]
Therefore we obtain that
\[\left( \sum_{g \in G} {x_g}^2 \right) \left( \sum_{h \in G} {y_h}^2 \right) = \sum_{c \in G} {f_c}^2.\]

Then, by the Hurwitz Theorem below, we know that $n = 2,4,$ or $8$ and the matrix $M$ represents the sign pattern of the multiplication table for some basis of $\mathbb{C}, \mathbb{H},$ or $\mathbb{O}$.

\begin{thm} \cite[10.1 Theorem]{huppert} (the Hurwitz Theorem)
Suppose that $K$ is a field of characteristic 0.
Suppose there exist polynomials
\[f_i = \sum_{j,k=1}^{n} a_{ijk} x_j y_k \in K[x_1,\dots, x_n, y_1, \dots, y_n],\]
bilinear in the $x_j, y_k$, such that
\[\sum_{i=1}^n {x_i}^2 \sum_{i=1}^n {y_i}^2 = \sum_{i=1}^n {f_i}^2.\]
Then $n$ is one of the numbers $1,2,4, 8$.
\end{thm}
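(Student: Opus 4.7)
The plan is to recast the polynomial identity as a matrix equation, extract Clifford-type anticommutation relations among $n-1$ matrices inside $M_n(K)$, and then bound $n$ by counting linearly independent monomials in those matrices against the dimension of $M_n(K)$.

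First I would encode the $f_i$ in matrix form. Writing $f_i = \sum_{j,k} a_{ijk} x_j y_k$, define matrices $A_j \in M_n(K)$ by $(A_j)_{ik} = a_{ijk}$, so that the column vector $(f_1, \ldots, f_n)^t$ equals $\bigl(\sum_{j} x_j A_j\bigr) y$. The hypothesis becomes
\[
\Bigl(\sum_j x_j A_j\Bigr)^{\!t}\Bigl(\sum_j x_j A_j\Bigr) = \Bigl(\sum_j x_j^2\Bigr) I,
\]
and comparing coefficients of $x_j x_k$ yields $A_j^t A_j = I$ for every $j$ together with $A_j^t A_k + A_k^t A_j = 0$ whenever $j \neq k$. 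Replacing each $A_j$ by $A_1^{-1} A_j$ (equivalently, a linear change of variables in $y$) normalises $A_1 = I$, and the matrices $B_j := A_j$ for $j \geq 2$ then satisfy $B_j^t = -B_j$, $B_j^2 = -I$, and $B_j B_k = -B_k B_j$ for $j \neq k$ --- the defining relations of a Clifford algebra.

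Next I would prove that the $2^{n-1}$ ordered monomials $B_I := B_{j_1} \cdots B_{j_k}$ indexed by subsets $I = \{j_1 < \cdots < j_k\} \subseteq \{2, \ldots, n\}$ are linearly independent in $M_n(K)$. A parity computation gives $B_I^t = (-1)^{k(k+1)/2} B_I$ and $B_I^2 = (-1)^{k(k+1)/2} I$, while for $I \neq J$ the product $B_I B_J^t$ reduces to a signed nonempty monomial $\pm B_M$ with $M = I \bigtriangleup J$. Conjugating such a $B_M$ by a suitable $B_\ell$ (taking $\ell \notin M$ when $|M|$ is odd and $\ell \in M$ when $|M|$ is even) flips its sign, so $\mathrm{tr}(B_M) = 0$, and pairwise trace-orthogonality yields the linear independence. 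Consequently $2^{n-1} \leq \dim_K M_n(K) = n^2$, already forcing $n \leq 8$.

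Finally I would sharpen the bound by splitting the $B_I$ into symmetric ones ($|I| \equiv 0, 3 \pmod 4$) and skew ones ($|I| \equiv 1, 2 \pmod 4$), which are forced to lie respectively inside the spaces of symmetric and skew matrices of dimensions $n(n+1)/2$ and $n(n-1)/2$, and then verify that the resulting refined inequalities exclude $n \in \{3, 5, 6, 7\}$. The main obstacle is this concluding numerical step: the combinatorics of partial sums of $\binom{n-1}{k}$ restricted to residues modulo $4$ has to be matched cleanly against the two triangular numbers, and one must handle the isolated edge case $M = \{2, \ldots, n\}$ with $|M|$ odd (where no $\ell \notin M$ exists inside $\{2, \ldots, n\}$) by an ad hoc eigenvalue or dimension argument. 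Assembling these refinements into the clean conclusion $n \in \{1, 2, 4, 8\}$ is the crux of the classical Hurwitz proof, and for general $K$ of characteristic $0$ one finally reduces to the real case by extension of scalars.
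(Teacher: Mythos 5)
The paper does not prove this statement; it quotes it from Huppert, so your argument has to stand on its own. Your first paragraph is correct and is the standard opening: the identity does translate into $A_j^tA_j=I$, $A_j^tA_k+A_k^tA_j=0$, and after normalising $A_1=I$ one gets $B_j^t=-B_j$, $B_j^2=-I$, $B_jB_k=-B_kB_j$ for $j,k\in\{2,\dots,n\}$. The sign computations $B_I^t=(-1)^{k(k+1)/2}B_I$ and $B_I^2=(-1)^{k(k+1)/2}I$ are also right. The argument breaks at the linear–independence claim. The "isolated edge case" $M=\{2,\dots,n\}$ is not an edge case that can be patched ad hoc: when $n\equiv 0\pmod 4$ the monomial $B_2\cdots B_n$ is symmetric, squares to $I$, and commutes with every $B_j$, and in the quaternion ($n=4$) and octonion ($n=8$) realisations it actually equals $\pm I$, i.e.\ it is proportional to $B_\emptyset$. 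So the $2^{n-1}$ monomials are genuinely dependent precisely in the cases that must survive. This is also visible arithmetically: at $n=8$ your inequality $2^{n-1}\le n^2$ reads $128\le 64$, so if the independence held it would \emph{refute} the eight-square identity rather than permit it (and it already fails at $n=7$, so the claim "forcing $n\le 8$" is wrong as stated). Finally, the step that actually excludes $n\in\{3,5,6,7\}$ is deferred as "the crux," and the symmetric/skew dimension count you sketch does not close it: for $n=5$ the number of skew monomials is $\binom{4}{1}+\binom{4}{2}=10$, which equals $\dim$ of the skew $5\times 5$ matrices, so no contradiction results.

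The standard way to finish from your (correct) Clifford relations --- and essentially the route Huppert takes --- is representation-theoretic rather than a raw dimension count. The matrices $\pm B_I$ form a finite $2$-group $G$ of order $2^n$ with centre containing $-I$; every irreducible representation of $G$ on which $-1$ acts as $-I$ has degree $2^{\lfloor (n-1)/2\rfloor}$ (equivalently, every irreducible module of the Clifford algebra $C_{n-1}$ has this dimension over an algebraically closed field). Decomposing $\bar K^n$ into such irreducibles forces $2^{\lfloor (n-1)/2\rfloor}\mid n$, and this divisibility holds exactly for $n\in\{1,2,4,8\}$. Note also that your closing remark about "reducing to the real case by extension of scalars" goes the wrong way; the correct move is to extend $K$ to its algebraic closure, where the Clifford-module argument applies directly.
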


\begin{example}
The following three matrices are examples of possible matrices of orders $2, 4, 8$, respectively.
\[\begin{pmatrix}
  1 & 1 \\ 1 & -1
\end{pmatrix}, \begin{pmatrix}
  1 & 1 & 1 & 1\\
  1 & -1 & 1 & -1 \\
  1 & -1 & -1 & 1 \\
  1 & 1 & -1 & -1 
\end{pmatrix}, \begin{pmatrix}
  1 & 1 & 1 & 1 & 1 & 1 & 1 & 1 \\
  1 & -1 & 1 & -1 & 1 & -1 & -1 & 1 \\
  1 & -1 & -1 & 1 & 1 & 1 & -1 & -1 \\
  1 & 1 & -1 & -1 & 1 & -1 & 1 & -1 \\
  1 & -1 & -1 & -1 & -1 & 1 & 1 & 1 \\
  1 & 1 & -1 & 1 & -1 & -1 & -1 & 1 \\
  1 & 1 & 1 & -1 & -1 & 1 & -1 & -1 \\
  1 & -1 & 1 & 1 & -1 & -1 & 1 & -1 
\end{pmatrix}.\]
\end{example}

Using the GAP system \cite{GAP4}, we enumerated all admissible matrices for each order and verified that the Jordan schemes derived from them are combinatorially isomorphic.
Consequently, only three essentially distinct Jordan schemes arise, of orders $8, 16, 32$.
The Jordan scheme of order $8$ is improper, whereas those of orders $16$ and $32$ are proper.
We denote the latter two by $J_{16}, J_{32}$, respectively.

\section{The real adjacency Jordan algebras of $J_{16}, J_{32}$}
In this section, we analyze the real adjacency Jordan algebras of Jordan schemes $J_{16}$ and $J_{32}$.

\begin{thm}
	Let $(X, S)$ be a Jordan scheme and let $\R S$ be the adjacency Jordan algebra of $(X,S)$.
	Let $1 = e_{1} + e_{2} + \cdots + e_{r}$ be a central idempotent decomposition of $1$ in the algebra $\mathrm{Alg}(\C S)$.
	If $e_{i}$ is in $\R S$, we have $e_{i} \in Z(\R S)$.
	Especially, if all $e_{i}$ is in $\R S$, then $\R S = \bigoplus_{i=0}^{r} e_{i} * \R S$ is a direct sum decomposition of $\R S$.
\end{thm}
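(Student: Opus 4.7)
The key observation is that centrality of $e_i$ in the associative algebra $\mathrm{Alg}(\C S)$ is a very strong hypothesis: since $\R S \subset \mathrm{Alg}(\C S)$, the matrix $e_i$ commutes with every element of $\R S$ under the ordinary matrix product. By Lemma \ref{AB=BA}(2), this gives the identification
\[
e_i * y \;=\; e_i y \;=\; y e_i \qquad \text{for every } y \in \R S,
\]
provided $e_i$ itself lies in $\R S$ so that the right-hand side is meaningful inside our Jordan algebra. This single identity will let us replace every Jordan product against $e_i$ by an ordinary matrix product, and the rest is computation.

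First I would prove $e_i \in Z(\R S)$ directly from the definition of commuting in a Jordan algebra. Fix $a, x \in \R S$. Since $a * x \in \R S \subset \mathrm{Alg}(\C S)$, centrality of $e_i$ yields $e_i (a*x) = (a*x) e_i$, so by the identity above,
\[
e_i * (a * x) \;=\; e_i (a*x).
\]
On the other hand, using $e_i x = x e_i$ and $e_i a = a e_i$,
\[
a * (e_i * x) \;=\; a * (e_i x) \;=\; \tfrac{1}{2}\bigl(a e_i x + e_i x a\bigr) \;=\; \tfrac{1}{2}\bigl(e_i a x + e_i x a\bigr) \;=\; e_i (a*x).
\]
Hence $e_i*(a*x) = a*(e_i*x)$ for all $a, x \in \R S$, so $e_i \in Z(\R S)$.

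Now suppose every $e_i$ lies in $\R S$. Because $e_i \in Z(\R S)$, the subspace $I_i := e_i * \R S$ is an ideal of $\R S$ (this is the standard correspondence between central idempotents and ideals, already recorded in the excerpt). Using $1 = e_1 + \cdots + e_r$ and bilinearity of $*$, any $y \in \R S$ satisfies $y = 1 * y = \sum_i e_i * y \in \sum_i I_i$, so $\R S = \sum_i I_i$. For the annihilation property, pick $e_i * a \in I_i$ and $e_j * b \in I_j$ with $i \ne j$; all four matrices $e_i, e_j, a, b$ lie in $\mathrm{Alg}(\C S)$, and $e_i$ and $e_j$ are central there, so
\[
(e_i * a) * (e_j * b) \;=\; (e_i a) * (e_j b) \;=\; \tfrac{1}{2}\bigl(e_i a e_j b + e_j b e_i a\bigr) \;=\; e_i e_j \cdot (a * b) \;=\; 0,
\]
using $e_i e_j = \delta_{ij} e_i = 0$. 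Finally, if $\sum_i x_i = 0$ with $x_i = e_i * a_i \in I_i$, applying the Jordan operator $e_j *$ kills every term with $i \ne j$ by the same central calculation $e_j (e_i a_i) = 0$, while $e_j * x_j = e_j (e_j a_j) = e_j a_j = x_j$; hence $x_j = 0$ for each $j$ and the sum is direct.

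The \textbf{main obstacle}, such as it is, lies entirely in keeping the two products cleanly separated: the hypothesis ``central in $\mathrm{Alg}(\C S)$'' is an associative statement, the conclusions ``central in $\R S$'' and ``direct sum of Jordan subalgebras'' are Jordan statements, and the whole argument consists of exploiting the associative commutativity to collapse Jordan products $e_i * (\cdot)$ back to ordinary products $e_i(\cdot)$. Once that bridge is in place via Lemma \ref{AB=BA}(2), no genuinely Jordan-theoretic manoeuvre is needed.
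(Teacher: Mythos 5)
Your proposal is correct and follows essentially the same route as the paper: the paper's proof likewise observes that $e_i x = x e_i$ for all $x \in \R S$ and then declares the verification of $e_i * (b*x) = b*(e_i*x)$ straightforward, which is exactly the computation you write out. You additionally spell out the direct-sum details (the annihilation $I_i * I_j = 0$ and directness) that the paper leaves to the standard correspondence between central idempotents and ideals; these calculations are correct.
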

\begin{proof}
  We assume that $e_i \in \R S$. Since $e_i$ is a central idempotent, it satisfies $e_i x = x e_i$ for all $x \in \R S$.
  It is then straightforward to verify that $e_i * (b * x) = b * (e_i * x)$ for all $b, x \in \R S$.
\end{proof}

The relation matrix $R(J_{16})$ of $J_{16}$ is the following, namely $R(J_{16}) := \sum_{i = 0}^{12} i \cdot A_i$.
\[R(J_{16}) = \begin{pmatrix}
  0 & 1 & 2 & 2 & 3 & 3 & 4 & 4 & 5 & 6 & 7 & 10 & 8 & 11 & 9 & 12 \\
  1 & 0 & 2 & 2 & 3 & 3 & 4 & 4 & 6 & 5 & 10 & 7 & 11 & 8 & 12 & 9 \\
  2 & 2 & 0 & 1 & 4 & 4 & 3 & 3 & 7 & 10 & 6 & 5 & 9 & 12 & 11 & 8 \\
  2 & 2 & 1 & 0 & 4 & 4 & 3 & 3 & 10 & 7 & 5 & 6 & 12 & 9 & 8 & 11 \\
  3 & 3 & 4 & 4 & 0 & 1 & 2 & 2 & 8 & 11 & 12 & 9 & 6 & 5 & 7 & 10 \\
  3 & 3 & 4 & 4 & 1 & 0 & 2 & 2 & 11 & 8 & 9 & 12 & 5 & 6 & 10 & 7 \\
  4 & 4 & 3 & 3 & 2 & 2 & 0 & 1 & 9 & 12 & 8 & 11 & 10 & 7 & 6 & 5 \\
  4 & 4 & 3 & 3 & 2 & 2 & 1 & 0 & 12 & 9 & 11 & 8 & 7 & 10 & 5 & 6 \\
  5 & 6 & 7 & 10 & 8 & 11 & 9 & 12 & 0 & 1 & 2 & 2 & 3 & 3 & 4 & 4 \\
  6 & 5 & 10 & 7 & 11 & 8 & 12 & 9 & 1 & 0 & 2 & 2 & 3 & 3 & 4 & 4 \\
  7 & 10 & 6 & 5 & 12 & 9 & 8 & 11 & 2 & 2 & 0 & 1 & 4 & 4 & 3 & 3 \\
  10 & 7 & 5 & 6 & 9 & 12 & 11 & 8 & 2 & 2 & 1 & 0 & 4 & 4 & 3 & 3 \\
  8 & 11 & 9 & 12 & 6 & 5 & 10 & 7 & 3 & 3 & 4 & 4 & 0 & 1 & 2 & 2 \\
  11 & 8 & 12 & 9 & 5 & 6 & 7 & 10 & 3 & 3 & 4 & 4 & 1 & 0 & 2 & 2 \\
  9 & 12 & 11 & 8 & 7 & 10 & 6 & 5 & 4 & 4 & 3 & 3 & 2 & 2 & 0 & 1 \\
  12 & 9 & 8 & 11 & 10 & 7 & 5 & 6 & 4 & 4 & 3 & 3 & 2 & 2 & 1 & 0 
\end{pmatrix}\]

Then we know that $\dim \mathrm{Alg}(\C S) = 24$ and the central idempotents of $\mathrm{Alg}(\C S)$ are
\begin{align*}
  e_1 &= \frac{1}{16} \left( A_0 + A_1 - A_2 - A_3 + A_4 - A_5 -A_6 + A_7 + A_8 - A_9 + A_{10} + A_{11} - A_{12} \right), \\
  e_2 &= \frac{1}{16} \left( A_0 + A_1 - A_2 - A_3 + A_4 + A_5 +A_6 - A_7 - A_8 + A_9 - A_{10} - A_{11} + A_{12} \right), \\
  e_3 &= \frac{1}{16} \left( A_0 + A_1 - A_2 + A_3 - A_4 - A_5 -A_6 + A_7 - A_8 + A_9 + A_{10} - A_{11} + A_{12} \right), \\
  e_4 &= \frac{1}{16} \left( A_0 + A_1 - A_2 + A_3 - A_4 + A_5 +A_6 - A_7 + A_8 - A_9 - A_{10} + A_{11} - A_{12} \right), \\
  e_5 &= \frac{1}{16} \left( A_0 + A_1 + A_2 - A_3 - A_4 - A_5 -A_6 - A_7 + A_8 + A_9 - A_{10} + A_{11} + A_{12} \right), \\
  e_6 &= \frac{1}{16} \left( A_0 + A_1 + A_2 - A_3 - A_4 + A_5 +A_6 + A_7 - A_8 - A_9 + A_{10} - A_{11} - A_{12} \right), \\
  e_7 &= \frac{1}{16} \left( A_0 + A_1 + A_2 + A_3 + A_4 - A_5 -A_6 - A_7 - A_8 - A_9 - A_{10} - A_{11} - A_{12} \right), \\
  e_8 &= \frac{1}{16} \left( A_0 + A_1 + A_2 + A_3 + A_4 + A_5 +A_6 + A_7 + A_8 + A_9 + A_{10} + A_{11} + A_{12} \right), \\
  e_9 &= \frac{1}{2} \left( A_0 - A_1 \right)
\end{align*}
and they are in $\R S$.
Moreover we have that
\[\dim_{\R} e_i * \R S = 1 \quad (i = 1, 2, \dots, 8), \quad \dim_{\R} e_9 * \R S = 5.\]
A basis of $e_9 * \R S$ is $\{e_9, e_9 A_5, e_9 A_7, e_9 A_8, e_9 A_{9}  \}$ and the Jordan products of two of them are
\[ e_9 * (e_9 A_i) = (e_9 A_i) * e_9 = e_9 A_i, \quad (e_9 A_i) * (e_9 A_j) = \delta_{i,j} e_9 \quad (i,j = 5,7,8,9). \]
Thus, $e_9 * \R S \cong \R \oplus_f \R^4$, where $f(\bm{x},\bm{y}) = \sum_i x_i y_i$ for any $\bm{x} = (x_1,x_2,x_3,x_4), \bm{y} = (y_1,y_2,y_3,y_4)$.

\begin{thm}
  Let $(X, S)$ be the Jordan scheme $J_{16}$. 
  \[ \R S \cong 8 \R \oplus (\R \oplus_f \R^4).\]
\end{thm}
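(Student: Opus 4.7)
The plan is to combine the central-idempotent decomposition theorem stated at the beginning of this section with the explicit data already exhibited for $J_{16}$. First I would verify that $\{e_1,\dots,e_9\}$ really forms a complete system of orthogonal central idempotents of $\mathrm{Alg}(\C S)$: each is symmetric and central, $e_i * e_j = 0$ for $i \neq j$, and $\sum_{i=1}^{9} e_i = A_0 = I$. Since all nine are visibly in $\R S$, the previous theorem yields the Jordan-algebra decomposition
\[
\R S \;=\; \bigoplus_{i=1}^{9} e_i * \R S,
\]
and the dimension count $8 \cdot 1 + 5 = 13 = \dim_\R \R S$ confirms that these summands exhaust $\R S$.

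Next I would identify each summand with a simple formally real Jordan algebra from the Jordan--von Neumann--Wigner list. For $i = 1, \dots, 8$, the summand $e_i * \R S$ is one-dimensional with unit $e_i$, hence $e_i * \R S \cong \R$; this produces the factor $8\R$.

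For $i = 9$, I would use the basis $\{e_9, e_9 A_5, e_9 A_7, e_9 A_8, e_9 A_9\}$ exhibited above together with the Jordan product relations
\[
e_9 * (e_9 A_k) \;=\; e_9 A_k, \qquad (e_9 A_k) * (e_9 A_\ell) \;=\; \delta_{k,\ell}\, e_9 \qquad (k, \ell \in \{5,7,8,9\}).
\]
These relations show that $e_9$ is the unit of the summand and that the four elements $e_9 A_k$ form an orthonormal basis of a $4$-dimensional subspace $V$ with respect to the inner product $f(\bm{x}, \bm{y}) = \sum_i x_i y_i$. Comparing with the construction of $\R \oplus_f V$ in Section 2, the assignment sending $e_9$ to $1$ and each $e_9 A_k$ to the corresponding standard basis vector of $\R^4$ extends to a Jordan-algebra isomorphism $e_9 * \R S \cong \R \oplus_f \R^4$. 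Assembling the summands yields the claimed $\R S \cong 8\R \oplus (\R \oplus_f \R^4)$.

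The principal obstacle is not this matching but the preceding computational step: producing the nine central idempotents $e_i$ in closed form inside $\mathrm{Alg}(\C S) \subset M_{16}(\C)$ and verifying the Jordan products $(e_9 A_k) * (e_9 A_\ell) = \delta_{k,\ell}\, e_9$. Once those computations are carried out (as above, largely by direct calculation in the $16 \times 16$ matrix algebra), the identification of the summands with $\R$ and $\R \oplus_f \R^4$ is immediate from the Jordan--von Neumann--Wigner classification together with the defining multiplication of $\R \oplus_f V$.
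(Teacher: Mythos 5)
Your proposal matches the paper's argument: the authors likewise invoke the theorem at the start of Section 5, exhibit the nine central idempotents $e_1,\dots,e_9$ of $\mathrm{Alg}(\C S)$ lying in $\R S$, record the dimensions $\dim_\R e_i * \R S$, and identify $e_9 * \R S$ with $\R \oplus_f \R^4$ via the stated basis and multiplication relations. The approach and all key steps are the same, so the proposal is correct.
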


Similarly, we can obtain the following.

\begin{thm}
  Let $(X, S)$ be the Jordan scheme $J_{32}$.
  \[ \R S \cong 16 \R \oplus (\R \oplus_f \R^8),\]
  where $f(\bm{x},\bm{y}) = \sum_{i} x_i y_i$ for any $\bm{x} = (x_1, x_2, \dots, x_8), \bm{y} = (y_1, y_2, \dots, y_8)$.
\end{thm}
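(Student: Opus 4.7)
The plan is to proceed in complete analogy with the proof of the preceding theorem for $J_{16}$, replacing $|G|=4$ by $|G|=8$ and the four-dimensional spin factor by an eight-dimensional one. Since $|S| = 3|G|+1 = 25 = 16 + (1+8)$, the dimension count is consistent with the claimed decomposition $16\R \oplus (\R \oplus_f \R^8)$.

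Concretely, I would first construct $J_{32}$ explicitly from $G = C_2^3$ together with the $8 \times 8$ admissible sign matrix displayed in the octonion example of Section~4, producing the $25$ basis matrices $A_0, A_1, \dots, A_{24}$ of degree $32$. Using a computer algebra system (as the authors have done throughout), I would form $\mathrm{Alg}(\C S)$, compute its center, and extract the primitive central idempotent decomposition $1 = e_1 + e_2 + \cdots + e_{17}$. Modeled on the $J_{16}$ calculation, I expect each $e_i$ to be a linear combination of the $A_k$ with coefficients in $\{\pm 1/32\}$, so that every $e_i$ lies in $\R S$. Theorem~5.1 then yields
\[
\R S \;=\; \bigoplus_{i=1}^{17} e_i * \R S,
\]
with $\dim_\R(e_i * \R S) = 1$ for $i = 1, \dots, 16$ (accounting for $16\R$) and $\dim_\R(e_{17} * \R S) = 9$.

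To identify the nine-dimensional summand, I would exhibit eight indices $i_1, \dots, i_8 \in \{1,\dots,24\}$ such that $\{e_{17},\, e_{17}*A_{i_1},\, \dots,\, e_{17}*A_{i_8}\}$ is a basis of $e_{17} * \R S$ and verify the Jordan multiplication relations
\[
e_{17} * (e_{17} * A_{i_k}) = e_{17} * A_{i_k}, \qquad (e_{17} * A_{i_k}) * (e_{17} * A_{i_\ell}) = \delta_{k,\ell}\, e_{17}.
\]
These relations exhibit $e_{17} * \R S$ as $\R \oplus_f \R^8$ with $f$ the standard positive definite inner product, completing the proof.

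The main obstacle is purely computational: the associative envelope $\mathrm{Alg}(\C S)$ has substantially higher dimension than $\C S$, and the explicit expressions of the $17$ primitive central idempotents in the $A_k$-basis cannot plausibly be written out by hand, so one must rely on machine computation to both produce the $e_i$ and check they lie in $\R S$. A conceptual point worth flagging is that the nine-dimensional summand must be distinguished from the other nine-dimensional simple formally real special Jordan algebra, namely $H_3(\C)$; this is handled automatically by the explicit multiplication table above, since eight pairwise Jordan-orthogonal square roots of $e_{17}$ cannot coexist inside $H_3(\C)$ (their associative envelope would contain a faithful representation of a Clifford algebra of rank $8$, which does not embed into $M_3(\C)$).
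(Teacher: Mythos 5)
Your proposal is correct and takes essentially the same route as the paper: for $J_{32}$ the authors simply remark that the result follows ``similarly'' to the $J_{16}$ case, i.e.\ by machine-computing $\mathrm{Alg}(\C S)$, checking that all primitive central idempotents lie in $\R S$, and exhibiting the $9$-dimensional summand via an explicit basis with the spin-factor multiplication table --- exactly the computation you describe. Your closing remark distinguishing $\R \oplus_f \R^8$ from $H_3(\C)$ is a reasonable extra observation, but it is already settled by the explicit multiplication relations on which both you and the paper rely.
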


%%%%%%%%%%%%%%%%%%%%%%
\section*{Acknowledgment}
The first author would like to thank Jesse Lansdown for valuable discussions on Jordan schemes at Shinshu University in August 2024.
The first author was supported by JSPS KAKENHI Grant Number JP22K03266.
The second author was supported by JSPS KAKENHI Grant Number JP20K03557.


\begin{thebibliography}{9}
\bibitem{cameron}
 	P. J. Cameron, Coherent configurations, association schemes and permutation groups, in Group, Combinatorics \& Geometry (Durham, 2001), World Scientific, River Edge, NJ, 2003, pp.55-71.

\bibitem{GAP4}
  The GAP~Group, \emph{GAP -- Groups, Algorithms, and Programming, Version 4.14.0}; 2024,
   \url{https://www.gap-system.org}.

\bibitem{huppert}
 	B. Huppert, Character theory of finite groups, Walter de Gruyter, 2011.

\bibitem{jacobson}
  N. Jacobson, Structure and representations of Jordan algebras, American Mathematical Society, 1968.

\bibitem{JNW}
  P. Jordan, J. von Neumann and E. Wigner, On an algebraic generalization of the quantum mechanical formalism, Ann. Math. {\bf 35}(1) (1934), 29-64.

\bibitem{kmr}
	M. Klin, M. Muzychuk and S. Reichard, Proper Jordan schemes exist. First examples, computer search, patterns of reasoning. An essay, \\ \url{http://arxiv.org/abs/1911.06160}.

\bibitem{minnesota}
	M. Koecher, The Minnesota Notes on Jordan Algebras and Their Applications, Springer, 1999.
	
\bibitem{MRK}
	M. Muzychuk, S. Reichard and  M. Klin, Jordan schemes, Israel J. Math. {\bf 249} (2022), 309-342.

\end{thebibliography}
\end{document}